\newenvironment{algorithm-hbox}{\hbadness=10000\begin{algorithm}}{\end{algorithm}}
\newtheorem{theorem}{Theorem}
\newtheorem{prop}[theorem]{Proposition}
\newtheorem{observation}[theorem]{Observation}
\renewcommand{\epsilon}{\varepsilon}
\renewcommand{\leq}{\leqslant}
\renewcommand{\geq}{\geqslant}
\newcommand{\Nat}{\mathbb{N}}
\newcommand{\redset}{\mathcal{R}}
\newcommand{\blueset}{\mathcal{B}}
\newcommand{\expt}{\mathbb{E}}
\newcommand{\prob}{\mathrm{Pr}}
\newcommand{\tl}[1]{\mathcal{T}_#1}
\newcommand{\eul}{\mathrm{e}}
\newcommand{\Oh}[1]{\mathcal{O}{\left(#1\right)}}
\newcommand{\oh}[1]{o{\left(#1\right)}}
\newcommand{\Th}[1]{\Theta{\left(#1\right)}}
\newcommand{\expb}[1]{\exp{\left( {#1} \right)}}
\begin{document}
\title{Random hypergraphs and Property B}

\author{Lech Duraj} 
\email{Lech.Duraj@uj.edu.pl}

\author{Jakub Kozik} 
\address{Theoretical Computer Science Department, Faculty of Mathematics and Computer Science, Jagiellonian University, Krak\'{o}w, Poland}
\email{Jakub.Kozik@uj.edu.pl}

\author{Dmitry Shabanov}
\email{dmitry.shabanov@phystech.edu}

\address{
Moscow Institute of Physics and Technology, Laboratory of Combinatorial and Geometric Structures, Dolgoprudny, Moscow Region, Russia
\\National Research University Higher School of Economics, Faculty of Computer Science, Moscow, Russia
}

\thanks{Research of L. Duraj and J. Kozik was partially supported by Polish National Science Center (2016/21/B/ST6/02165).}

\thanks{Research of D. Shabanov was supported by grant of the President of Russian Federation no. MD-1562.2020.1 and by the program "Leading Scientific Schools", grant no. NSh- 2540.2020.1.}

\begin{abstract}
In 1964 Erd\H{o}s proved that $(1+\oh{1})) \frac{\eul \ln(2)}{4} k^2 2^{k}$ edges are sufficient to build a $k$-graph which is not two colorable.
To this day, it is not known whether there exist such $k$-graphs with smaller number of edges.
Erd\H{o}s' bound is consequence of the fact that a hypergraph with $k^2/2$ vertices and $M(k)=(1+\oh{1}) \frac{\eul \ln(2)}{4} k^2 2^{k}$ randomly chosen edges of size $k$ is asymptotically almost surely not two colorable.
Our first main result implies that for any $\varepsilon > 0$, any $k$-graph with $(1-\varepsilon) M(k)$ randomly and uniformly chosen edges is a.a.s. two colorable.
The presented proof is an adaptation of the second moment method analogous to the developments of Achlioptas and Moore from 2002 who considered the problem with fixed size of edges and number of vertices tending to infinity.
In the second part of the paper we consider the problem of algorithmic coloring of random $k$-graphs.
We show that quite simple, and somewhat greedy procedure, a.a.s. finds a proper two coloring for random $k$-graphs on $k^2/2$ vertices, with at most $\Oh{k\ln k\cdot 2^k}$ edges. 
That is of the same asymptotic order as the analogue of the \emph{algorithmic barrier} defined by Achlioptas and Coja-Oghlan in 2008, for the case of fixed $k$.
\end{abstract}

\maketitle

\section{Introduction}

The smallest number of edges in a $k$-graph (i.e. $k$-uniform hypergraph) that is not two colorable is denoted by $m(k)$.
Early results by Erd\H{o}s from 60's \cite{Erd1963,Erd1964} determined that the exponential factor of the growth of $m(k)$ is $2^k$ (that is $\log_2(m(k)) \sim k$).
In 1975 Erd\H{o}s and Lov\'{a}sz in \cite{EL1975} suggested that \emph{perhaps $k \cdot 2^{k}$ is the correct order of magnitude for $m(k)$}.

Despite a few improvements on the side of the lower bounds (the most recent one by Radhakrishnan and Srinivasan in \cite{RS00}) the upper bound of Erd\H{o}s from 1964 has not been improved since.
He proved in \cite{Erd1964} that
\begin{align}
\label{eq:ErdUB}
	m(k) \leq (1+\oh{1}) \frac{\eul \ln(2)}{4} k^2 2^{k}.
\end{align}

This bound results from the fact that the random hypergraph with that number of edges, over the set of vertices of size about $k^2/2$, a.a.s. can not be properly two colored.
The chosen number of vertices, turns out to give the smallest number of edges in the random construction.
Our motivation for taking a closer look on random $k$-graphs on $k^2/2$ vertices stems mainly from this fact.
Within these constrains we address two problems. First, we show that the bound of Erd\H{o}s is tight for random hypergraphs.
Then, we focus on random $k$-graphs with smaller number of edges and discuss the problem of finding a proper coloring efficiently.

Analogous problems have been considered in the context of random constraint satisfaction problems, where the size $k$ of constraints/edges was fixed and the number of variables/vertices $n$ was tending to infinity.
In that framework, the straightforward first moment calculation shows that if $r \geq 2^{k-1} \ln(2)- \ln(2)/2$, then $H_k(n, r \cdot n)$ is a.a.s. not two colorable.
That observation was complemented by Achlioptas and Moore \cite{AM2002} who proved that,
if $r<2^{k-1} \ln(2)- (1+\ln(2))/2+o_k(1)$, then random hypergraph $H_k(n, r \cdot n)$ is a.a.s. two colorable.
This is the first of the papers that directly inspire our developments
(the bound itself has been later improved in \cite{CP2012} and \cite{CZ2012}).

Second result from that area which provides a context for our considerations on efficient coloring, was proved by Achlioptas and Coja-Oghlan in \cite{ACO08}.
They discussed the evolution of the space of solutions when successive random constraints are added to the instance.
They observed that, at some point, the set of solutions, which in certain sense is initially connected, undergoes a sudden change after which it becomes \emph{shattered} into exponentially many well separated regions.
That behavior has been interpreted as a barrier for effective algorithms.
Since that time, for some specific problems, algorithms and their analyses were improved up to the threshold of shattering (see e.g. \cite{CojaO2010}) but none surpassed that barrier.

The case of our interests, when $n= \Theta(k^2)$, is slightly different, and most proofs for fixed $k$ do not translate literally.
The ideas of the proofs however usually can be applied. As shown by our work, the resulting proofs turn out to be technically simpler.
Moreover, since the values like $2^{k-1}$ are no longer constants, we were able to obtain the sharp threshold for two colorability of the random hypergraph in many cases. Note that for fixed $k$, this question is still open (however the results in \cite{CP2012} give very tight bounds).
For a similar problem $k$-SAT with not too slowly growing $k$, Frieze and Wormald in \cite{FW05} proved the existence of satisfiability threshold by analogous methods.

\section{Main results}
When discussing two coloring of a hypergraph, we use colors blue and red. 
We analyze the problem of coloring of random $k$-graphs with large $k$.
Therefore asymptotic statements shall be understood with $k\rightarrow \infty$. 
We say that a property holds \emph{asymptotically almost surely} if the probability that it holds is $1-\oh{1}$.
For positive integer $b$ we denote by $a^{\underline{b}}$ the falling factorial, i.e. 
\[
	a^{\underline{b}} = \prod_{j=0,\ldots,b-1} (a-j).
\]
For positive integers $n\geq 2k$, we define $\varphi= \varphi(n,k)$ to be such that
\[
	\frac{(n/2)^{\underline{k}}}{n^{\underline{k}}} = \frac{\varphi(n,k)}{2^{k}}.
\]
Note that always $2^{-k} \leq \varphi(n,k)\leq 1$.
Moreover, for $n= \omega( k^{3/2})$, we have $\varphi(n,k) \sim \exp(-k^2/(2n))$. 
In particular, $ n = \omega( k^2)$ implies $\varphi(n,k)\sim 1$.

The main subjects of our considerations are random $k$-graphs.
We denote the number of vertices by $n$, and the number of distinct uniformly sampled edges by $m$.
The resulting probabilistic space is denoted by $H_k(n,m)$.
In other words $H_k(n,m)$ can be seen as uniformly chosen $k$-graph on $n$ vertices with $m$ edges.
For technical convenience we assume that $n$ is always even.
Our main interest lies in $k$-graphs with $\Theta(k^2)$ vertices, but the presented proof covers much larger range of parameters.

\subsection{Sharp threshold for two colorability}
\begin{theorem}
\label{thm:main}
	For any $\varepsilon>0$ and any superlinear function $n=n(k)$ satisfying $\ln(n) = \oh{k}$ 
	if 
	\[
		c= c(n,k) < (1-\varepsilon) \ln(2)/\varphi(n,k),
	\]
	then hypergraph $H_k(n,c n 2^{k-1})$ is a.a.s. two colorable (when $k$ goes to infinity).
\end{theorem}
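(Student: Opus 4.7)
The plan is the Achlioptas--Moore second moment method, applied to the random variable $X$ counting \emph{balanced} 2-colorings of $H_k(n,m)$, i.e.\ those with exactly $n/2$ red and $n/2$ blue vertices. Restricting to balanced colorings is essential: unbalanced ones contribute little to $\expt[X]$ while inflating the variance. Under a fixed balanced $\sigma$, a uniformly chosen $k$-subset is monochromatic with probability $2\binom{n/2}{k}/\binom{n}{k} = \varphi(n,k)/2^{k-1}$; since $m = cn 2^{k-1} \ll \binom{n}{k}$ (which follows from $\ln n = \oh{k}$), the first moment satisfies
\[
\expt[X] \;=\; (1+\oh{1})\,\frac{2^n}{\sqrt{\pi n/2}}\cdot\expb{-cn\,\varphi(n,k)},
\]
and under the hypothesis $c\varphi(n,k) \leq (1-\epsi)\ln 2$ this is $\expb{\Omega(n)}$, in particular $\expt[X]\to\infty$.

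For the second moment I would index ordered pairs of balanced colorings $(\sigma,\tau)$ by their overlap $\alpha\in[0,1]$, where $\sigma$ and $\tau$ agree on $\alpha n$ vertices; the number of such pairs is $\binom{n}{n/2}\binom{n/2}{\alpha n/2}^2$, and the four cells of the common refinement of $\sigma$ and $\tau$ have sizes $\alpha n/2,\,(1-\alpha)n/2,\,(1-\alpha)n/2,\,\alpha n/2$. An edge is monochromatic under both $\sigma$ and $\tau$ iff it lies entirely inside a single cell, so inclusion--exclusion gives
\[
\prob\bigl[e\text{ proper in both }\sigma,\tau\bigr] \;=\; 1 - \frac{\varphi(n,k)}{2^{k-2}} + \psi(\alpha), \qquad \psi(\alpha)\;=\;\frac{2\binom{\alpha n/2}{k}+2\binom{(1-\alpha)n/2}{k}}{\binom{n}{k}}.
\]
After Stirling, the ratio $\expt[X^2]/\expt[X]^2$ reduces to a Laplace-type sum over $\alpha$, and the task becomes to show that the per-vertex exponent
\[
g(\alpha) \;=\; H(\alpha) - \ln 2 \;+\; c\cdot 2^{k-1}\bigl[\psi(\alpha) - \varphi(n,k)^2/4^{k-1}\bigr]
\]
is at most $\oh{1}$ on $[0,1]$ with a quadratic maximum at $\alpha = 1/2$, where $H(\alpha) = -\alpha\ln\alpha - (1-\alpha)\ln(1-\alpha)$ denotes the natural-log entropy.

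The uniform upper bound on $g$ is the crux and the main technical obstacle. I would treat three regimes. Near $\alpha = 1/2$, the Taylor expansion $H(\alpha) - \ln 2 = -2(\alpha - 1/2)^2 + \Oh{(\alpha - 1/2)^4}$ dominates and $\psi(\alpha) = \Oh{4^{-k+1}}$ is negligible, producing the Gaussian behaviour responsible for the $\Oh{1}$ Laplace contribution. For $\alpha$ bounded away from $0, 1/2, 1$, the approximation $\binom{\beta n}{k}/\binom{n}{k} \approx \beta^k\expb{-k(k-1)(1-\beta)/(2\beta n)}$ (valid since $\ln n = \oh{k}$) shows that $c\cdot 2^{k-1}\psi(\alpha)$ is exponentially small in $k$, so $g(\alpha) \leq H(\alpha) - \ln 2 + \oh{1}$ is bounded strictly below zero by the concavity of $H$. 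The delicate regime is $\alpha\to 0$ (and symmetrically $\alpha\to 1$), where $c\cdot 2^{k-1}\psi(\alpha) \sim c\varphi(n,k)\expb{-\alpha k}$ has the same order as $H(\alpha) - \ln 2$; there I would analyse $\alpha\mapsto H(\alpha) - \ln 2 + c\varphi(n,k)\expb{-\alpha k}$ on $(0,1/2)$ by first-derivative computation and use the hypothesis $c\varphi(n,k) \leq (1-\epsi)\ln 2$ to bound the maximum by a negative constant depending on $\epsi$. Gluing the three regimes into one uniform bound is the careful bookkeeping step.

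Given $\expt[X^2] = \Oh{\expt[X]^2}$, Paley--Zygmund produces $\prob[X>0] \geq \Omega(1)$, which is positive probability of two colorability but not yet the a.a.s.\ conclusion. To close this gap I would invoke Friedgut's sharp-threshold theorem: two colorability is a decreasing, vertex-symmetric property of random uniform hypergraphs, so positive probability at edge density $c$ forces asymptotically full probability at any strictly smaller density $c'<c$. Running the second moment calculation with $\epsi/2$ in place of $\epsi$ and then applying Friedgut across the density gap from $(1-\epsi/2)\ln 2/\varphi$ down to $(1-\epsi)\ln 2/\varphi$ yields Theorem~\ref{thm:main} as stated.
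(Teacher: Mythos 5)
Your setup is correct — the second moment method on equitable colorings, the first moment estimate, the overlap decomposition, and the inclusion–exclusion formula for the pair probability all match what the paper does. You have also correctly located the delicate regime (small and large overlap). However, there is a genuine gap at the very end: you only aim for $\expt[X^2] = \Oh{\expt[X]^2}$, which via Paley–Zygmund yields $\prob[X>0] = \Omega(1)$, and you then invoke Friedgut's sharp-threshold theorem to bootstrap to a.a.s. This step does not go through in the present setting. Friedgut's theorem is a statement about a sequence of product spaces with $n\to\infty$ while the constraint size $k$ (or, more precisely, the ground category of the symmetry group) is fixed; what it gives is that the transition window is $o$ of the threshold. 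Here $k\to\infty$ and $n=n(k)$ is a single designated sequence, so the usual form of Friedgut does not directly apply, and even if some analogue held, you would need to compare the window width to the density gap $\epsi \ln 2 / \varphi(n,k)$ in a regime where both are moving with $k$. You do not address this, and the appeal to Friedgut as stated ("positive probability at density $c$ forces full probability at any smaller density") is not what the theorem says.

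The paper avoids Friedgut entirely by proving the \emph{stronger} bound $\expt[X_2]/\expt[X]^2 \leq 1 + \oh{1}$, which by Paley–Zygmund gives $\prob[X>0]\geq 1-\oh{1}$ directly. The mechanism is worth internalizing: write the ratio as $\sum_a F(a)G(a)$ with $F(a) = \binom{n/2}{a}^2/\binom{n}{n/2}$; the Vandermonde identity gives $\sum_{a=0}^{n/2} F(a) = 1$ \emph{exactly}, so it suffices to show that the correction factor $G(a)\leq 1+\oh{1}$ uniformly on a central interval $(\delta, n/2-\delta)$ with $\delta = n\ln n/k$, and that the tail contributions are exponentially small in $n$ (the latter uses $F(\delta)\cdot G(0)$ together with the hypothesis $c\varphi < (1-\epsi)\ln 2$). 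This exploits the fact that in the regime $k\to\infty$, $\ln n = \oh{k}$, the overlap-dependent correction in $G$ genuinely vanishes, whereas for fixed $k$ (Achlioptas–Moore) the Laplace peak carries a constant $>1$ and one truly needs a sharp-threshold argument. Your three-regime Laplace plan could in principle be pushed to give $1+\oh{1}$ if carried out with exact constants, but as written it settles for $\Oh{1}$ and then leans on an unavailable theorem; the fix is to tighten the central-region estimate (not to patch the endgame with Friedgut).
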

Observe that for $\varepsilon = 0$, the number of edges $c n 2^{k-1}$ is asymptotically equivalent to the upper bound (\ref{eq:ErdUB}).
Note also that, by using the first moment argument, it is straightforward to check that if
\[
		c= c(n,k) > (1+\varepsilon) \ln(2)/\varphi(n,k),
\]
then hypergraph $H_k(n,c n 2^{k-1})$ is not two colorable a.a.s., which means that Theorem 1 provides the sharp threshold for the two colorability of $H_k(n,m)$ for a growing function $k$ satisfying $\ln n=\oh{k}$ and superlinear number of vertices.

In the proof of Theorem \ref{thm:main} we apply the second moment method along the lines of developments of Achlioptas and Moore from \cite{AM2002}.
An important modification that allowed to essentially simplify the argument is that we focus only on equitable colorings (following the idea from \cite{KKS2019}).
The assumption $\ln(n)=\oh{k}$ can be weakened to at least $n = \oh{2^k /k}$ at the expense of more complicated calculations.
Similarly, the assumption on superlinear growth of $n$ can be replaced by $n > \alpha k$ for some constant $\alpha$.
It could be dropped completely if the edges of $H_k(n,m)$ were chosen with replacement.

\subsection{Algorithms for two coloring.}

\begin{theorem}
\label{thm:constructive}
	For any fixed $\alpha< 1/2$, superlinear polynomially bounded function $n=n(k)$ and
	\[
			m \leq 2 \alpha \frac{n \ln(k)}{k} \varphi(n,k)^{-1} 2^{k-1},
	\]
	there exists an efficient algorithm that a.a.s. finds a proper two coloring of random $k$-graph $H_k(n,m)$.
\end{theorem}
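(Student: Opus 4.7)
The plan is to analyse the following simple two-phase algorithm. In phase 1, pick a uniformly random equitable initial colouring $\chi_0$ of the $n$ vertices. In phase 2, repair $\chi_0$ by iteratively selecting a monochromatic edge $e$, choosing a \emph{safe} vertex $v \in e$ (one whose flip breaks $e$ without creating any new monochromatic edge), and flipping $v$. The analysis will show that a.a.s.\ every monochromatic edge always contains such a safe vertex, so the algorithm terminates at a proper colouring after at most $|B_0|$ flips, where $B_0$ denotes the set of monochromatic edges of $\chi_0$.

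Under an equitable colouring each edge is monochromatic with probability exactly $2\varphi(n,k)/2^k$, hence by the hypothesis on $m$
\[
\expt |B_0| \;=\; 2m\varphi(n,k)/2^k \;\leq\; 2\alpha \frac{n \ln k}{k},
\]
and a Chebyshev (or Chernoff) argument yields $|B_0| = \Oh{n \ln(k)/k}$ a.a.s.

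Call a vertex $v$ \emph{safe} with respect to $\chi_0$ if $v$ lies in no edge in which $v$ is the unique vertex of its colour, so that flipping $v$ produces no new monochromatic edge. The crucial lemma is that a.a.s.\ every $e \in B_0$ contains a safe vertex. To prove it, fix $e \in B_0$ and $v \in e$; the expected number of edges through $v$ witnessing non-safety of $v$ is at most $(mk/n)\cdot(2\varphi(n,k)/2^k) \leq 2\alpha \ln k$, and since $\alpha < 1/2$ a Poisson-type lower bound gives
\[
\prob[v \text{ is safe}] \;\geq\; k^{-2\alpha + \oh{1}}.
\]
Because the safety events for the $k$ vertices of $e$ are, even after conditioning on $e$ being monochromatic, sufficiently close to independent,
\[
\prob\bigl[\text{no } v \in e \text{ is safe}\bigr] \;\leq\; \bigl(1 - k^{-2\alpha + \oh{1}}\bigr)^k \;\leq\; \expb{-k^{1 - 2\alpha + \oh{1}}}.
\]
Multiplying by $\expt|B_0|$ and using the polynomial bound on $n$ gives $\oh{1}$, so a.a.s.\ every $e \in B_0$ contains a safe vertex.

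The remaining issue is to verify that the flips can actually be performed without later interactions creating new monochromatic edges: flipping $v$ first may, for instance, leave another vertex $u$ as the unique vertex of its colour in some shared edge $f$, so that a subsequent flip of $u$ would turn $f$ monochromatic. I would handle this either by performing the flips sequentially and using that the total number $|B_0|$ of flips is small compared with the size of the safe set, or by slightly strengthening the notion of safety (requiring $v$ to lie in no edge with only a bounded number of vertices of its colour) and showing, via essentially the same expectation computation, that every $e \in B_0$ still contains such a vertex. The main technical obstacle lies precisely in this robustness argument and in the joint estimate above: since the edges of $H_k(n,m)$ are drawn without replacement, the ``unsafe'' indicators are weakly but non-trivially correlated, and either a Poissonisation argument or a direct second moment computation is needed to justify the near-independence used above.
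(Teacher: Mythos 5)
Your proposal captures the correct quantitative landscape (the $k^{-2\alpha+\oh 1}$ safety probability, the $\Oh{n\ln k/k}$ bound on the number of monochromatic edges, and the role of $\alpha<1/2$ in making the failure probability per edge $\expb{-k^{1-2\alpha+\oh 1}}$ summable), and you correctly identify the binomial/Poissonisation device for handling the without-replacement dependence. However, there is a genuine gap in the core argument, and it is exactly the one you flag at the end as ``the main technical obstacle'': you prove only that a.a.s.\ every \emph{initially} monochromatic edge contains an \emph{initially} safe vertex, and then wave at the fact that flips change the safe set. This is not a minor robustness issue. When you flip a safe red vertex $v$ to blue, you can create new almost-blue edges whose heads are still-red vertices (take any potential edge $f$ with $k-2$ blue vertices, $v$, and one other red vertex $u$; after flipping $v$, the vertex $u$ is the head of an almost-blue edge and is no longer safe). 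So the safe vertex $v_2$ you reserved for a later edge $e_2$ may become unsafe before you get to $e_2$, and the union bound over $B_0$ computed against the initial distribution simply does not control the algorithm's trajectory.

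The paper resolves this with a more carefully engineered algorithm and a deferred-decisions analysis rather than with either of the two fixes you sketch. Concretely: (i) flips are done in red/blue pairs so the colouring stays equitable, which keeps the per-check failure probability stationary; (ii) a safe vertex is \emph{searched for} at the moment it is needed, revealing only the indicators of potential edges headed at the currently inspected vertex, and one proves (Observation \ref{obs:safety-chance}) that, conditionally on everything revealed so far, each fresh safety check succeeds with probability at least $\delta = k^{-2\alpha(1+\oh 1)}$; (iii) a combinatorial observation (\ref{obs:safety-chance}'s companion, Observation \ref{obs:safety-chance}/\ref{obs:checked-colored}) shows that safety checks on distinct same-coloured vertices examine disjoint indicator sets, so the checks are genuinely independent; and (iv) the rare event that the same potential edge is re-examined for the opposite colour (a ``corrupted safety check'') is handled separately and shown to a.a.s.\ never occur (Observation \ref{obs:no-corrupted}). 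Your sketch has none of this machinery, and the two repair strategies you propose (counting flips against the size of the safe set, or strengthening the safety notion) are not developed far enough to see whether they would close the gap; in particular, the first ignores that the \emph{identity} of the safe vertices matters, not just their count, and the second would degrade the exponent in the safety probability in a way you would then need to re-balance against $\alpha<1/2$. As written, the proposal does not constitute a proof.
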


When the constructive aspects are considered, efficient usually means that the running time is bounded by a polynomial of the input size.
It is rather obvious, that the simple procedure that we analyze satisfy this requirement, as 
it can be easily seen to work in time $\Oh{n k m}$.
Moreover, since it depends only on a small fraction of the actual edges, for some carefully tailored (but hardly natural) representation of the hypergraph, for most inputs its running time can be even sublinear.

As far as we know, the problem of algorithmic coloring of $k$-graphs, when $n$ is polynomially bounded in $k$, has not been directly addressed before.
However some results, obtained for fixed $k$,  can be adapted.
Achlioptas, Kim, Krivelevich and Tetali presented in \cite{AKKT02} a procedure that finds proper colorings for $H_k(n,m)$ when the number of edges $m$ is bounded by some function that is $O(n/k\cdot  2^k)$.
They also argued that their analysis of the procedure is tight up to the constants.
Literally, their proof requires $n = \omega( k^2 )$, but it can be easily modified to cover also the case when $n= \Th{k^2}$, with the cost of constants only.
The procedure constructs an equitable coloring by coloring consecutive pairs of vertices into distinct colors.
The vertices are chosen according to some fixed order as long as there are no edges which are monochromatic so far but have at most three not colored vertices.
In such a case it picks two not colored vertices of such an edge.
For a properly bounded number of edges in a random $k$-graph, a.a.s. the procedure never observes a monochromatic edge with only one not colored vertex, and hence succeeds in finding a proper coloring.
Within the same framework of fixed $k$, Achlioptas and Coja-Oghlan in \cite{ACO08} observed that the shape of the space of proper colorings of $H_k(n, r n)$ suddenly changes when $r$  passes value $(1 + \epsilon_k )2^{k-1}\ln(k)/k$.
That value was called the \emph{shattering threshold}.
One of the consequences of the change is that the space becomes disconnected (two colorings are considered adjacent if they are of small Hamming distance).
Although, above that threshold and until $m = \Th{n 2^k}$, proper colorings almost surely exist, the problem of finding one becomes significantly harder. 
When $n$ is polynomially bounded in $k$, we observe somewhat analogous behavior for the number of edges of the order $\Th{n \cdot \ln(k)/k \cdot 2^k}$.
The analysis of our procedure can be extended to justify that, for $\alpha < 1/2$, a proper coloring is usually surrounded by a large number of other proper colorings that are of small Hamming distance.
On the other hand, similar arguments can be used to justify that, when $\alpha = \omega(1)$, proper colorings tend to be isolated.

\section{Proof of Theorem \ref{thm:main}}

We fix $\varepsilon>0$ and assume that $n=n(k)$ and $c=c(n,k)$ satisfy the assumptions of Theorem \ref{thm:main}.
In the canonical definition of $H_k(n,m)$, edges of the hypergraph are sampled without replacement.
That causes certain purely technical complications in the calculations.
In order to avoid them we alter the probabilistic space slightly.
Let $H'_k(n,m)$ be a random multi-hypergraph in which $m$ edges are sampled with replacement from the set of all $k$-subsets of a set of size $n$.
Clearly $H'_k(n,m)$ conditioned on the event that all edges are distinct is $H_k(n,m)$.
However, for our parameters that is asymptotically almost sure event.
Indeed, classical analysis of the birthday paradox problem implies that for $m = \oh{\binom{n}{k}^{1/2}}$ the probability of observing two identical edges is $\oh{1}$.
In our case, $m$ is exponential in $k$ (as $n= o(2^k)$ and $\varphi(n,k)^{-1} \leq 2^k$), while $\binom{n}{k}^{1/2} \geq \left(\frac{n}{k}\right)^{k/2}$ is superexponential (since $n$ is superlinear in $k$).
It means that within our parameters any asymptotically almost sure event in $H'_k(n,m)$ is also a.a.s. in $H_k(n,m)$.
From now on we work with $H'_k(n,m)$.

Let $H$ be a random hypergraph sampled with distribution $H'_k(n,m)$,
$X$ denote the number of proper equitable (i.e. every color class consists of exactly $n/2$ vertices) two colorings of $H$, and $X_2$ be the number of ordered pairs of such colorings. By Payley-Zygmund inequality we get
\[
	\prob[X>0] \geq \frac{\expt[X]^2}{\expt[X_2]}.
\]
We aim at showing that, for large $k$, hypergraph $H$ is two colorable with high probability so we would like to have
\[
	\frac{\expt[X]^2}{\expt[X_2]} \sim 1,
\]
for $k$ tending to infinity. 
Since the above quotient is bounded from above by 1 it is enough to prove that 
\begin{equation}
\label{eq:mainQuotient}
	\frac{\expt[X_2]}{\expt[X]^2} \leq  1+\oh{1}.
\end{equation}
We have
\[
	\expt[X]= {n \choose n/2} \left(1-2 \frac{{n/2 \choose k}}{{n \choose k}}\right)^m
\]
and
\[
	\expt[X_2]= \sum_{a=0}^{n/2} {n \choose n/2} {n/2 \choose a}^2 \left( 1- 4 \frac{{n/2 \choose k}}{{n \choose k}}
			+2 \frac{{a \choose k}}{{n \choose k}}+2 \frac{{n/2-a \choose k}}{{n\choose k}}\right)^m,
\]
where $a$ corresponds to the size of the intersection of the sets of red vertices in two equitable colorings,
binomial coefficients stand for the number of pairs of equitable colorings at Hamming distance $2a$,
and the expression under the $m$-th power is the inclusion-exclusion formula for the probability that a random edge is properly colored by both equitable colorings of a fixed pair with Hamming distance $2a$.
Denote
\[
	F(a)= \frac{{n/2 \choose a}^2}{{n \choose n/2}}
	\;\;\;\text{and}\;\;\;
	G(a)=  \left( \frac{1- 4 \frac{(n/2)^{\underline{k}}}{n^{\underline{k}}}
				+2 \frac{a^{\underline{k}}}{n^{\underline{k}}}
				+2 \frac{(n/2-a)^{\underline{k}}}{n^{\underline{k}}}}
				{\left(1-2 \frac{(n/2)^{\underline{k}}}{n^{\underline{k}}}\right)^2} \right)^m,
\]
so that
\begin{equation}
\label{eq:mainSum}
\frac{\expt[X_2]}{\expt[X]^2} = \sum_{a=0}^{n/2} F(a) \cdot G(a).
\end{equation}
It is natural idea to apply the Laplace method for the above sum -- i.e. determine the range of $a$ that constitute \emph{central interval}, prove that properly rescaled sum within that interval is close to a Gaussian integral and finally show that contributions of the remaining ranges of $a$ are negligible.
We generally follow this path. We manage however to avoid a lot of technical details.

We choose $\delta = n \ln(n) / k$ and define the \emph{central interval} as
$\mathcal{C} = (\delta,n/2-\delta)\cap \Nat$ and \emph{tail intervals} as $\tl1=[0,\delta]\cap \Nat$ and $\tl2=[n/2-\delta, n/2]\cap \Nat$.
Note that our assumption on $n$ implies that $\delta/n= \oh{1}$, in particular the central interval is not empty.
In the next section we show that the contribution from the central interval to the sum (\ref{eq:mainSum}) is at most $1+\oh{1}$.
Then we show that the contributions from the tails are negligible.
That completes the proof of inequality (\ref{eq:mainQuotient}) and Theorem \ref{thm:main}.

\subsection{Central region}
We start with showing that $G(a)$ is essentially constant in $\mathcal{C}$.
\begin{prop}
We have
\[
	G(a) \leq 1+ \oh{1},
\]
uniformly for $a\in \mathcal{C}$.
\end{prop}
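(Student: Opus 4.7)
The plan is to write $G(a) = (1+\Delta(a))^m$ with
\[
\Delta(a) := \frac{2\,S(a)/n^{\underline{k}} - 4p^2}{(1-2p)^2},\qquad S(a):=a^{\underline{k}}+(n/2-a)^{\underline{k}},\qquad p:=\frac{(n/2)^{\underline{k}}}{n^{\underline{k}}} \leq 2^{-k}.
\]
Since $(1-2p)^2 \to 1$, $(1+x)^m\le\expb{mx}$ for $x\geq 0$ (and $(1+x)^m\le 1$ for $x\le 0$), and the $-4p^2$ correction only helps, it suffices to establish
\[
m\cdot S(a)/n^{\underline{k}} = \oh{1}\quad\text{uniformly for }a\in\mathcal{C}.
\]

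Next I would exploit the symmetry $S(a)=S(n/2-a)$, which together with $\delta<n/4$ for large $k$ (guaranteed by $\ln n = \oh{k}$) allows us to assume $a\in(\delta, n/4]$. The elementary inequality $(b-i)/(n-i)\le b/n$ (for $0\le i$ and $b\le n$), applied factor-by-factor, gives $a^{\underline{k}}/n^{\underline{k}}\le(a/n)^k\le(1/4)^k$. For the other summand I use monotonicity, $(n/2-a)^{\underline{k}}\le(n/2-\delta)^{\underline{k}}$, and compare to $(n/2)^{\underline{k}}$:
\[
\frac{(n/2-\delta)^{\underline{k}}}{(n/2)^{\underline{k}}} = \prod_{i=0}^{k-1}\Bigl(1-\tfrac{\delta}{n/2-i}\Bigr)\le(1-2\delta/n)^k\le\expb{-2k\delta/n}=\expb{-2\ln n}=n^{-2},
\]
so that $(n/2-a)^{\underline{k}}/n^{\underline{k}}\le p/n^2$. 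Combining, $S(a)/n^{\underline{k}}\le(1/4)^k+p/n^2$.

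It remains to multiply by $m\le Cn\,2^k/\varphi$ (immediate from the hypothesis on $c$) and check that both terms are $\oh{1}$. Using $p = \varphi/2^k$,
\[
m\cdot p/n^2 = \Oh{1/n} = \oh{1}, \qquad m\cdot(1/4)^k = \Oh{n/(2^k\varphi)} = \expb{\ln n + k^2/(2n) - k\ln 2 + \oh{1}} = \oh{1},
\]
where in the second estimate the exponent tends to $-\infty$ because superlinearity of $n$ forces $k^2/n=\oh{k}$ and by hypothesis $\ln n=\oh{k}$. This gives $m\Delta(a)=\oh{1}$ uniformly, and hence $G(a)\le 1+\oh{1}$.

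The step I expect to matter most is extracting the sharp upper bound on $(n/2-a)^{\underline{k}}/n^{\underline{k}}$. The naive route of using $((n/2-a)/n)^k\le(1/2)^k/n^2$ produces only $\Oh{1/(n\varphi)}$ after multiplication by $m$; since $1/\varphi$ can be as large as $\expb{\Th{k^2/n}}$ when $n$ is only barely superlinear in $k$, such an estimate would fail outside the comfortable regime $n=\omega(k^2)$. Dividing by $(n/2)^{\underline{k}}$ rather than by $(n/2)^{k}$ recovers precisely the missing factor of $\varphi$ and keeps the argument uniform across the entire range of $n$ permitted by the theorem.
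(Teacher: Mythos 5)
Your proof is correct and follows essentially the same route as the paper's. The paper invokes convexity plus symmetry of $G$ to reduce to evaluating $G(\delta)$, and then folds the term $\delta^{\underline{k}}$ into $(n/2-\delta)^{\underline{k}}$ in one step, whereas you use symmetry plus monotonicity of the falling factorial to reduce to $a\in(\delta,n/4]$ and handle the two summands of $S(a)$ separately; these are minor organizational variants of the same idea. The crucial estimate in both arguments is identical: compare $(n/2-\delta)^{\underline{k}}$ to $(n/2)^{\underline{k}}$ factor by factor to obtain the bound $(1-2\delta/n)^k\le n^{-2}$, and then use $m\varphi = \Oh{n 2^{k}}$ to get $m\cdot p/n^2 = \Oh{1/n}$. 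Your closing remark about why one must divide by $(n/2)^{\underline{k}}$ rather than $(n/2)^k$ — so that the factor $\varphi$ is recovered exactly — correctly identifies the step that makes the argument work uniformly over all superlinear $n$, and it is precisely what the paper does implicitly. One small imprecision: the display $\Oh{n/(2^k\varphi)}=\expb{\ln n + k^2/(2n)-k\ln 2 + \oh{1}}$ tacitly uses $\ln(1/\varphi)=k^2/(2n)+\oh{1}$, which the paper only claims for $n=\omega(k^{3/2})$; for barely superlinear $n$ one should instead use the one-sided bound $\ln(1/\varphi)\le (1+\oh{1})\,k^2/(2n)=\oh{k}$, which together with $\ln n = \oh{k}$ still drives the exponent to $-\infty$, so the conclusion is unaffected.
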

\begin{proof}
Function $G(a)$ is convex and symmetric on the central interval.
Therefore is has to be maximized in the ends of $\mathcal{C}$.
We assume that $k$ is large enough that $n/2 - \delta > k$.
For any $a\in \mathcal{C}$, we get
\begin{align*}
G(a) \leq G(\delta) & \leq \left( \frac{1- 2 \frac{\varphi}{2^{k-1}}
				+4 \frac{(n/2 -\delta)^{\underline{k}}}{n^{\underline{k}}}}
				{\left(1-\frac{\varphi}{2^{k-1}}\right)^2} \right)^m
\\
& \leq \left( \frac{1- 2 \frac{\varphi}{2^{k-1}}
				+4 \frac{(n/2)^{\underline{k}}}{n^{\underline{k}}}\frac{(n/2 -\delta)^{k}}{(n/2)^{k}}}
				{\left(1-\frac{\varphi}{2^{k-1}}\right)^2} \right)^m				
\\
& \leq \left(1+ \frac{
		\varphi \cdot 2^{-(k-2)} \cdot (1 - 2 \delta/n)^{k}
	}	
	{\left(1-\frac{\varphi}{2^{k-1}}\right)^2}\right)^m	 \\
& \leq \exp \left( m \cdot \varphi \cdot 2^{-(k-2)} \cdot (1 - 2 \delta/n)^{k} \cdot \left(1-\frac{\varphi}{2^{k-1}}\right)^{-2}\right).		
\end{align*}
We focus on the exponent with the intention of showing that it is $\oh{1}$.
We have
\begin{align*}
m \cdot \varphi \cdot 2^{-(k-2)} \cdot (1 -2 \delta/n)^{k} \cdot \left(1-\frac{\varphi}{2^{k-1}}\right)^{-2}
=
\Oh{ n \cdot \exp(-2 \delta k /n) }.
\end{align*}
Since we chose $\delta = n \ln(n)/k$ the above expression is $\oh{1}$, hence $G(a)\leq G(\delta) = 1+\oh{1}$.
\end{proof}

By the above Proposition we have that
\[
	\sum_{a\in \mathcal{C}} F(a)\cdot G(a) \leq (1+\oh{1})\cdot  \sum_{a\in \mathcal{C}} F(a) < (1+\oh{1}) \cdot \sum_{a=0}^{n/2} \frac{{n/2 \choose a}^2}{{n \choose n/2}}.
\]
The last sum, however is well known to be equal to 1. Altogether we get that
\[
	\sum_{a\in \mathcal{C}} F(a)\cdot G(a) \leq 1+\oh{1}.
\]

\subsection{Tails}
By symmetry of the analyzed functions it is sufficient to consider only one of the tails.
We focus on $\tl1$.
Functions $F(.)$ and $G(.)$ are respectively increasing and decreasing on $\tl1$.
Therefore for any $a\in \tl1$, we have
\[
	F(a) \cdot G(a) \leq F(\delta)\cdot G(0).
\]
Now
\begin{align*}
	G(0) = \left( 1- 2 \frac{(n/2)^{\underline{k}}}{n^{\underline{k}}} \right)^{-m}
	= \left( 1- \frac{\varphi}{2^{k-1}} \right)^{-m}
	= \exp \left(\frac{m \varphi}{2^{k-1}} + \Oh{m 2^{-2k}\varphi^2}\right)
	\\
	= (1 + \oh{1}) \cdot \exp( c \cdot \varphi \cdot n).
\end{align*}
Note that we used the fact that $n= \oh{2^k}$ in order to obtain $m 2^{-2k}\varphi^2= \oh{1}$.

In order to bound $F(\delta)$ we use the well known facts that ${n \choose n/2} > \frac{2^n}{n}$ and ${n/2 \choose \delta} < 2^{(n/2) H(2 \delta/n)}$, where $H(.)$ is the binary entropy function.
We get
\begin{align*}
	F(\delta) < n \cdot \left( 2^{H(2 \delta/n) -1} \right)^n.
\end{align*}
Combining both bounds we obtain
\[
	F(a)\cdot G(a) \leq (1+\oh{1})\cdot n \cdot \left( \exp(c \cdot \varphi - \ln(2) (1-H(2 \delta/n)) \right)^n.
\]
Since $\delta/n = \oh{1}$ we also have $H(2 \delta/n)=\oh{1}$.
Therefore, whenever $c \cdot \varphi < (1-\epsilon) \ln(2)$ the above expression is exponentially small in $n$.
In particular (for sufficiently large $n$) it is still $\oh{1}$ when summed over $\tl1$.

\section{Algorithmic coloring of random $k$-graphs}
Theorem \ref{thm:constructive} has been stated in terms of uniform random hypergraph $H_k(n,m)$.
However, closely related binomial model allows for much more natural proof.
In a random hypergraph $H_k(n,p)$ every $k$-subset of the set of vertices of size $n$ is independently added to the hypergraph with probability $p$.
We are going to work with the following technical statement.

\begin{prop}
\label{prop:constructiveP}
	For any fixed $\alpha< 1/2$, and any superlinear polynomially bounded function $n=n(k)$, there exists an efficient algorithm that on random $k$-graph $H_k(n,p)$ with the expected number of edges
	\[
		m \leq 2 \alpha \frac{n \ln(k)}{k} \varphi(n,k)^{-1} 2^{k-1}
	\]
	a.a.s. finds a proper two coloring.
\end{prop}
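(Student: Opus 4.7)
The plan is to describe and analyze the algorithm already outlined in the introduction, closely following the AKKT-style strategy but adapted to the regime where $n$ grows polynomially in $k$ and the factor $\varphi(n,k)$ plays a nontrivial role.  The algorithm fixes a vertex ordering $v_1,\ldots,v_n$ and at each of its $n/2$ steps either performs a \emph{regular step}, colouring the next two uncoloured vertices in the ordering with different colours, or, if there is currently a monochromatic-so-far edge with at most three uncoloured vertices, performs a \emph{preemptive step}: pick two uncoloured vertices of that edge and colour them one blue and one red.  The resulting colouring is equitable by construction.

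The first part of the proof is a deterministic structural observation: the algorithm produces a proper two-colouring as long as no edge ever reaches the state of being monochromatic-so-far with exactly one uncoloured vertex.  A short case analysis, splitting on regular versus preemptive steps, shows that the only way to create such a state is during a preemptive step aimed at one edge $e^\ast$, when a second edge $e$ happens to itself be monochromatic-so-far with exactly two uncoloured vertices, to share an uncoloured vertex with $e^\ast$, and to have that shared vertex coloured with the colour matching $e$'s current monochromatic colour.  In a regular step the preemption rule forces every monochromatic edge to have at least four uncoloured vertices, which rules out the jump to the forbidden state, while during a preemptive step a little bookkeeping reduces the bad event to the described cascade.

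The probabilistic part is a first moment bound on the occurrence of such cascades.  Working in the binomial model $H_k(n,p)$ and using the principle of deferred decisions, I would estimate, for each step $t$, the expected number of edges that are monochromatic-so-far with $j\in\{2,3\}$ uncoloured vertices, together with the conditional probability that two such dangerous edges share the vertex being coloured by the preemptive step; summing the product over $t$ and applying Markov's inequality would bound the probability of a bad event by $\oh{1}$.  The precise constant $1/2$ in the threshold $\alpha<1/2$ emerges from the balance between the two factors entering this first moment: the density of potentially monochromatic edges in the underlying equitable colouring, which is of order $\alpha n\ln(k)/k$, and the conditional density of a second aligned dangerous edge meeting a given one at an uncoloured vertex.

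The main obstacle will be that the set of coloured vertices after step $t$ is not exactly $\{v_1,\ldots,v_{2t}\}$, because preemptive steps shift the order of coloured vertices and so entangle the colouring pattern with the random edge set.  I would address this by first showing, via a separate first moment argument, that asymptotically almost surely the total number of preemptive steps is bounded by a quantity of order $n\ln(k)/k=\oh{n}$, so that the colouring pattern differs from the pure greedy pattern on at most a vanishing fraction of vertices, and the dangerous-edge expectations above can then be evaluated as in the pure greedy case up to negligible multiplicative corrections.
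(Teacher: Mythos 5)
Your proposal reconstructs the Achlioptas–Kim–Krivelevich–Tetali (AKKT) greedy procedure — coloring vertices two at a time in a fixed order, with preemptive steps when a dangerous monochromatic-so-far edge appears. That is \emph{not} the algorithm the paper analyses, and more importantly it cannot reach the bound claimed in Proposition~\ref{prop:constructiveP}. As the paper itself notes in the introduction, the AKKT procedure handles only $m = O\bigl(\tfrac{n}{k}\, 2^{k}\bigr)$ edges, and the AKKT analysis of it is tight up to constants. The proposition asks for $m$ up to $2\alpha \tfrac{n \ln k}{k}\varphi^{-1}2^{k-1} = \Theta\bigl(\tfrac{n\ln k}{k}2^{k}\bigr)$, a factor of $\ln k$ larger. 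No refinement of the first-moment cascade analysis you sketch can close this gap, because the bottleneck is not the accounting but the algorithm: once preemptive steps become frequent (which happens well below the $\ln k$ regime), the online greedy process loses control of the coloring pattern. Your own closing remark about preemptive steps being $o(n)$ would already fail at the target density.

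The paper's algorithm is of a fundamentally different type. It starts from a fixed equitable coloring $(\redset_0,\blueset_0)$ and then \emph{repairs} the monochromatic edges one at a time by swapping pairs of vertices that are \emph{safe} (not the head of any almost-monochromatic edge); recoloring a safe vertex creates no new monochromatic edges, so progress is monotone. The analysis hinges on showing that the probability a given vertex is safe is roughly $k^{-2\alpha(1+o(1))}$, which for $\alpha<1/2$ is $\omega(1/k)$; this is exactly what guarantees that a random $k$-edge a.a.s.\ contains a safe vertex to recolor, and it is precisely where the threshold $\alpha<1/2$ originates. The remaining work is a union bound over $O(q)=O\bigl(\tfrac{n\ln k}{k}\bigr)$ repair steps, plus a careful argument (via ``corrupted safety checks'') that the indicator variables revealed during safety checks stay essentially independent. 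Your plan does not touch any of these ideas; to fix the proposal you would need to replace the greedy-online strategy with a repair-style local search and develop the safety-probability estimate, or find some other mechanism to gain the $\ln k$ factor.
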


The number of edges in $H_k(n,p)$ is distributed binomially, and hence it is concentrated.
Together with the fact that the property of being two colorable is monotonic, it implies that the statement analogous to the above proposition is also true for the uniform model $H_k(n,m)$.
Hence the above proposition implies Theorem \ref{thm:constructive}.
For the rest of this section we work with the binomial model.

Let $\alpha< 1/2$ and $n=n(k)$ satisfy the assumptions of Proposition \ref{prop:constructiveP}. 
We define $p$ and $q$ as
\[
	p \cdot {n \choose k} = \varphi^{-1} q 2^{k-1}, \;\;\;\;\;\; q = 2 \alpha \frac{n \ln(k)}{k},
\]
so that the expected number of edges is $m$ as in Proposition \ref{prop:constructiveP}.
Then, the expected number of monochromatic edges in an equitable coloring is
\[
	2  \cdot {n/2 \choose k} \cdot p =  q.
\]

For a given coloring, an edge is \emph{almost monochromatic} if all but one of its vertices are of the same color.
The vertex of that edge with the unique color is called its \emph{head}.
A vertex is called \emph{safe} if it is not the head of any almost monochromatic edge.

\subsection{Procedure}

Let $\redset_0$, $\blueset_0$ be a fixed equipartition of the vertex set of $H=H_k(n,p)$.
We initially color the vertices of $\mathcal{R}_0$ red and the vertices of $\mathcal{B}_0$ blue.
Since, in our case, the input hypergraph is random we can work with a fixed partition.
If the algorithm were to be applied to a specific instance, it seems more appropriate to start with a random one.
The general idea of the procedure is to keep recoloring safe vertices belonging to monochromatic edges until the coloring becomes proper.
Note that changing the color of a safe vertex does not create new monochromatic edges.
Additionally, for convenience, we want to keep the coloring equitable.
Therefore, in every round, we try to repair two monochromatic edges of different colors by switching the colors of two safe vertices.

The main loop of the algorithm is presented as Listing \ref{alg:main}.
In a single step, it switches colors of two carefully chosen safe vertices.
Procedures \FuncSty{PickRedVertex()} and  \FuncSty{PickBlueVertex()} are responsible for the choices.
The first of them is presented on Listing \ref{alg:pickRed}. The other one is symmetric.
Global variables $\redset$ and $\blueset$ denote the current partition of the vertices into red and blue.
Initially, $\redset = \redset_0$ and $\blueset = \blueset_0$.
These variables are modified only in the main loop.
They are also used implicitly in the safety checks within choice procedures.
Values $N_R, N_B$ are the numbers of initially red and blue monochromatic edges, respectively.
The value of $r$ (resp. $b$) corresponds to the index of currently or most recently considered initially red (resp. blue) edge.
Both these variables are global.
Beside the main loop, they are used in the corresponding choice procedures.

When the algorithm reaches the point when all initially monochromatic edges are already considered (i.e. $r>N_R$ and $b>N_B$), the main loop ends
and the current coloring (that is proper for the input hypergraph) is returned.
However the algorithm may fail if, at some point, function \FuncSty{PickRedVertex()} or  \FuncSty{PickBlueVertex()} is unable to find a good vertex to be recolored.

\begin{algorithm}[H]
\label{alg:main}
$\redset \gets \redset_0$,
$\blueset \gets \blueset_0$,
$r\gets 1$,
$b\gets 1$ \;
 \While{$r \leq N_R $ or $b \leq N_B$}{
 	$v_R \gets$ \FuncSty{PickRedVertex()} \;
 	$v_B \gets$ \FuncSty{PickBlueVertex()} \;
    \tcc{switch colors of vertices}
    $\redset \gets \redset \setminus \{v_R\} \cup \{v_B\}$ \;
    $\blueset \gets \redset \setminus \{v_B\} \cup \{v_R\}$ \;
  } 
  \Return{$(\redset, \blueset)$}
 \caption{The main procedure}
\end{algorithm}

We describe only procedure \FuncSty{PickRedVertex()}, the other one is symmetric.
Recall that $r$ is a global variable so in each consecutive call of the procedure it starts with the value of $r$ as left by the previous call.
Originally empty list $C_R$ contains the vertices that has been already tested for safety.
It is used only in \FuncSty{PickRedVertex()}, but we also need its state to be preserved between calls.

The procedure first advances into the next red edge and then iterates over its vertices (revealed in a random order) in search for a safe vertex or a vertex that has been recolored before.
It may happen that during the evaluation we run out of red edges (i.e. $r$ exceeds $N_R$) but we still need to do some recolorings to get rid of blue ones.
In order to deal with this issue we extend the list of red edges -- for any $r > N_R$, the $r$-th red edge is just uniformly chosen $k$-subset of $\redset_0$.
In fact we want the function to pick a randomly chosen red safe vertex -- extending the list of red edges with artificial random ones is just a way of realizing that task.

When iterating over the vertices of the current red edge, the procedure performs a few tests.
First it checks if the current vertex has been observed before (i.e. whether it belongs to $C_R$).
If not, it is appended to $C_R$ and checked for safety.
If the safety check succeeds, the vertex is returned from the function (and hence is picked as the one to be recolored next).
Otherwise, we continue iterating over the vertices of the current edge.

If the current vertex has been observed before, it was either recolored or unsafe at that time.
If it was recolored, then the current edge is no longer red and we can advance to considering the next edge.
This is done by restarting the function (the first instruction is to increase $r$ which points to the current red edge).
If the vertex was unsafe at the time of being checked, we treat it as still unsafe and continue iterating over the vertices of the current edge
(we ignore the possibility that the vertex might be safe in the current coloring).

It may happen that we reach the end of the current edge without finding safe or recolored vertex.
We distinguish two cases here based on the value of $r$.
If $r\leq N_R$, the problematic edge is an actual edge of the hypergraph.
In such a case we declare failure.
On the other hand, for $r > N_R$, the problematic edge does not belong to the hypergraph, and has been added artificially to the list.
In that case, we can simply ignore the problem and keep looking for another vertex to be recolored, by considering next artificial edges.

\begin{algorithm}[H]
\label{alg:pickRed}
	$r \gets r+1$     \tcp*[r]{advance to the next edge}
	\For{$j=1, \ldots, k$}{
		$v \gets $ reveal the $j$-th vertex of the $r$-th red edge \;
		 \If {$C_R$ does not contain $v$}
        {
            append $v$ to $C_R$\; 
            \lIf{ $v$ is safe in the current coloring} {\Return $v$ }
        }
        \Else{
        	\If {$v$ has been recolored}{
        		restart the procedure
        	}
        }
	}
\lIf{ $r\leq N_R$ {\bf or} $C_R$ contains all elements of $\redset_0$} {FAIL}
	\lElse{ restart the procedure} 
 \caption{Procedure \FuncSty{PickRedVertex()}}
\end{algorithm}

\subsection{Analysis}

Random hypergraph $H_k(n,p)$ can be seen as the product of ${n \choose k}$ independent Bernoulli trials, each with probability of success $p$.
With every such trial, and hence with every $k$-subset $e$ of V, we associate the indicator random variable $I_e$ that corresponds the outcome of the trial.
In order to keep a clear distinction between potential edges and the actual edges of the hypergraph, instead of referring to a potential edge $f\subset V$ we refer to its indicator random variable $I_f$.

The probability distribution $H_k(n,p)$ over the $k$-graphs on $n$ vertices can be also obtained by other constructions.
We describe one that can be easily modified to suit our needs.
Let $N_R$ be a random variable with binomial distribution $B( {n/2 \choose k} , p)$.
We first determine the value of $N_R$, then chose uniformly that number of distinct $k$-subsets of $\redset_0$ and put them in the constructed hypergraph as edges.
The edges contained in $\blueset_0$ are constructed in the same way.
The remaining edges, i.e. the ones that are not monochromatic in the initial coloring, are added to the hypergraph just like before -- according to the values of the corresponding indicators.
So far the resulting probability distribution 
is just the same as in the original construction.
It turns out to cause technical difficulties that monochromatic edges chosen in such a way are constrained to be distinct and hence they are dependent.
For small enough $p$ (like the ones that we are interested in), the dependence is very weak.
To get rid of this constraint we alter the probabilistic space slightly.
Random multi-hypergraph $H'_k(n,p)$ is defined by an analogous construction with the only change that the $N_R$ (resp. $N_B$) edges contained in $\redset_0$ (resp. $\blueset_0$) are chosen with repetitions, and hence independently.
Clearly, $H'_k(n,p)$ conditioned on the event that no edge has been included more than once, is just $H_k(n,p)$.
That event happens with asymptotic probability $1$.
Indeed, birthday paradox problem tells that we need to pick roughly ${n/2 \choose k}^{1/2}$ independent red edges to observe a repeated one with positive probability.
On the other hand $N_R$, which has mean $q/2$, almost surely takes much smaller value.
This discussion shows, that if some property holds a.a.s. in $H'_k(n,p)$ it also has to hold a.a.s. in $H_k(n,p)$.
From now on we work with $H'_k(n,p)$.

The algorithm ends successfully when there are no more monochromatic edges, but it may fail earlier when no recoloring candidate is found.
We prove below that the algorithm asymptotically almost surely succeeds.
We concentrate on the red edges, as the situation of the blue ones is symmetric.
During the safety check, we reveal only the values of indicators $I_e$ for $k$-sets $e$, for which the checked vertex is the head, i.e. check if our $k$-graph actually contains corresponding sets as edges.
The initially monochromatic edges are revealed in a random order.
We deliberately altered the probabilistic space so that the consecutive red edges are drawn independently and uniformly from all $k$-subsets of $\redset_0$.
\begin{observation}
\label{obs:recol-steps}
	The initial number of red (resp. blue) edges a.a.s. does not exceed $q$.
\end{observation}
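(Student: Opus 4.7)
The plan is short because this observation really amounts to a standard concentration inequality once the mean is computed. First, I would compute $\expt[N_R]$ explicitly. By definition, $N_R \sim B\bigl({n/2 \choose k}, p\bigr)$, and the relations
\[
	p \cdot {n \choose k} = \varphi^{-1} q \, 2^{k-1}
	\quad \text{together with} \quad
	\frac{{n/2 \choose k}}{{n \choose k}} = \frac{\varphi}{2^{k}}
\]
yield $\expt[N_R] = p \cdot {n/2 \choose k} = q/2$ exactly. The expectation of $N_B$ is the same by symmetry.

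Next, I would argue that $q \to \infty$. Since $q = 2\alpha n \ln(k)/k$ with $n$ a superlinear function of $k$, the ratio $n/k$ grows without bound, and even more so after multiplying by $\ln(k)$. This is the key structural observation: the mean $q/2$ of each binomial variable diverges, which is precisely what makes standard Chernoff bounds useful.

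The concentration step is then routine. Applying, for instance, the multiplicative Chernoff bound in the form
\[
	\prob\bigl[N_R > (1+\lambda)\expt[N_R]\bigr] \leq \expb{-\lambda^{2} \expt[N_R] / (2+\lambda)}
\]
with $\lambda = 1$ and $\expt[N_R] = q/2$, I obtain $\prob[N_R > q] \leq \expb{-q/6} = \oh{1}$ because $q \to \infty$. The same estimate applies to $N_B$, and a union bound finishes the argument.

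The only step requiring a sanity check is verifying that the binomial mean is exactly $q/2$ (so that $q$ is in fact an asymptotic doubling of the mean rather than some more delicate threshold); all the remaining work is a textbook tail bound. There is no serious obstacle here, and the argument does not interact with the algorithm or the altered probabilistic space $H'_k(n,p)$ at all, since $N_R$ is defined identically in both models.
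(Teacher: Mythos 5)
Your proof is correct and takes essentially the same route as the paper: identify $N_R$ as binomial with mean $q/2$, note that $q \to \infty$, and apply a Chernoff bound. The paper states this in two sentences without the explicit constants; you have simply filled in the routine details.
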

\begin{proof}
The number of initially red edges is distributed binomially with mean $q/2$.
Chernoff bound shows that probability of having more than $q$ red edges is exponentially small in $q$.
The same bound holds for the number of initially blue edges.
\end{proof}

\begin{observation}
\label{obs:safety-chance}
Safety checks on different red (resp. blue) vertices are performed on disjoint sets of indicators.
\end{observation}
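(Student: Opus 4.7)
The plan is to reduce the observation to a simple monotonicity statement about the two picking procedures: every vertex of $\redset_0$ that ever gets recolored does so exactly once, from red to blue, and symmetrically for $\blueset_0$. I would first verify this by direct inspection of the code. Every edge scanned by \FuncSty{PickRedVertex()} --- whether a genuine initially red edge ($r \leq N_R$) or an artificial $k$-subset of $\redset_0$ drawn when $r > N_R$ --- lies inside $\redset_0$, so any vertex $v$ that reaches the safety test is in $\redset_0$. Moreover the test is only entered when $v \notin C_R$, meaning $v$ has never been picked before; and since \FuncSty{PickBlueVertex()} only ever inspects vertices of $\blueset_0$, no call to that procedure can have recolored $v$ either. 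Hence at the time of the check $v$ is still red. The symmetric statement holds for \FuncSty{PickBlueVertex()} and $\blueset_0$.

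Given this monotonicity, the observation is an immediate deduction. A safety check on a red vertex $v$ at time $t$ reveals indicators $I_e$ only for $k$-sets of the form $e = \{v\} \cup S$ with $S$ a $(k-1)$-subset of the current $\blueset$. Suppose two such checks on distinct red vertices $v_1, v_2$ at times $t_1 < t_2$ revealed a common indicator $I_e$. Then $\{v_1, v_2\} \subseteq e$, and the check on $v_1$ forces every vertex of $e \setminus \{v_1\}$, and in particular $v_2$, to be blue at time $t_1$. But $v_2 \in \redset_0$ is red at $t_2$, so by the monotonicity it was red at every earlier time, contradicting its blue status at $t_1$. The case of two blue safety checks is symmetric.

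The only point worth being careful about is the monotonicity itself: one must rule out the scenario in which a vertex originally in $\redset_0$ is flipped back to red after having been recolored. This is precisely where the syntactic fact that \FuncSty{PickBlueVertex()} inspects only $\blueset_0$-subsets does the work. Once that is spelled out, the remaining combinatorial disjointness argument is essentially a one-line head-counting deduction.
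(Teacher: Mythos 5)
Your proposal is correct and matches the paper's proof in substance. The paper argues tersely that if $e$ were inspected for two red vertices $v$ and $w$ with $w$ checked later, then at the time of $v$'s check $w$ would still be red (having not yet been recolored), so $e$ would contain two red vertices and could not have been revealed as a potential almost-blue edge headed by $v$; you reach the same contradiction by first isolating the monotonicity of recoloring (vertices of $\redset_0$ are only ever examined by \FuncSty{PickRedVertex()} and so can only move red $\to$ blue, once) and then deducing that $v_2$ being red at the later time $t_2$ forces it red at $t_1$. This is the same head-counting argument, merely with the implicit invariant made explicit.
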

\begin{proof}
Suppose to the contrary that the indicator of a set $e$ is checked for two red vertices $v$ and $w$.
Without loss of generality the check on $w$ is the later one.
But then during $v$'s check, $w$ is still red, so $e$ would contain at least two red vertices during the check, which is impossible.
\end{proof}

The last observation guarantees that each indicator is tested at most once during safety checks of red vertices.
In fact, most of the indicators are tested at most once during the whole evaluation of the algorithm.
However, a small number of indicators may be tested twice, for both colors.
Consider such an indicator $I_f$ of some set $f$, first tested during a safety check of a vertex $w$, and then again for a vertex $u$ of opposite color. 
Assuming without loss of generality that $w$ was red during the check, and $u$ was blue, the only way for this to happen is that $f \setminus\{w\}$ was first entirely blue, and then all the vertices of $f$ except $u$ and $w$ were recolored to red. 

Observe that it can only harm the algorithm if $f$ is indeed an edge, and thus may render $u$ or $w$ unsafe. We will call such situation -- a second safety check on an actual edge -- a \emph{corrupted safety check}.
But as $f$ must have been initially an almost-monochromatic edge, and then it would have to get almost all its vertices recolored, it is a very unlikely situation, and we will prove in Observation \ref{obs:no-corrupted} that a.a.s. no corrupted safety check will happen. 

For the sake on analysis, we slightly modify the safety checks to only truly inspect the indicators that were not tested before. For any indicator previously seen, we assume that the check automatically succeeds -- note that with no corrupted safety checks the assumption must be true. 

\begin{observation}
\label{obs:safety-check-prob}
Independently of the outcomes of the previous checks, the probability of a vertex passing the modified safety check is at least $\delta = k^{-2\alpha(1+\oh{1})}$.
\end{observation}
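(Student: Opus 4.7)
The plan is to exploit the independence of the fresh Bernoulli indicators that drive the modified safety check. When the check is performed on a vertex $v$ which is currently red, the potential almost-monochromatic edges with head $v$ are precisely the $k$-sets containing $v$ whose remaining $k-1$ vertices are all blue. Since the main procedure swaps colours in pairs the coloring is always equitable, so $|\blueset| = n/2$ and there are exactly $\binom{n/2}{k-1}$ such potential edges to worry about.

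Next I would argue that the check only inspects indicators whose values are conditionally independent of everything revealed so far. By Observation~\ref{obs:safety-chance}, no indicator probed during this check has been probed during any earlier red safety check. An indicator probed during an earlier blue safety check could in principle reappear, but in the modified procedure such indicators are declared safe automatically, so they contribute neither information nor a failure possibility. Hence the indicators examined now are fresh, and, conditional on the entire past (recolourings plus prior safety-check outcomes of both colours), remain independent $\mathrm{Bernoulli}(p)$ variables.

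Given this independence, the probability of passing the modified check is at least $(1-p)^N$ with $N \le \binom{n/2}{k-1}$. The remaining work is a short asymptotic computation. Starting from $p\binom{n}{k}=\varphi^{-1} q 2^{k-1}$ and the identity $\binom{n/2}{k-1}/\binom{n}{k} = k\,(n/2)^{\underline{k-1}}/n^{\underline{k}}$, together with $(n/2)^{\underline{k-1}} = (n/2)^{\underline{k}}/(n/2-k+1)$ and $(n/2)^{\underline{k}}/n^{\underline{k}} = \varphi/2^k$, one collapses the product to
\[
 p\binom{n/2}{k-1} \;=\; \frac{qk}{n-2k+2} \;=\; 2\alpha\ln(k)\cdot(1+\oh{1}),
\]
using $q = 2\alpha n\ln(k)/k$ and the superlinear growth of $n$ (so $k = \oh{n}$). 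Because $p$ is exponentially small in $k$, the quadratic term in $\ln(1-p)$ contributes $\oh{1}$ to the exponent, giving $(1-p)^N \ge \exp\!\bigl(-2\alpha\ln(k)(1+\oh{1})\bigr) = k^{-2\alpha(1+\oh{1})}$, as required.

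The only delicate point is the second step: one must carefully describe what is meant by "history" and verify that the indicators inspected in the current check are genuinely untouched by it. The deliberate modification of the safety procedure — skipping indicators already seen — is precisely what decouples the current check from earlier blue-vertex checks; without it the conditional independence statement would fail. Once this is pinned down, the combinatorial estimate is routine, and the stated bound follows.
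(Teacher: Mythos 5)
Your proof is correct and follows the same approach as the paper's: bound the number of potential almost-monochromatic edges with head $v$ by $\binom{n/2}{k-1}$, use independence of the fresh indicator variables to obtain the lower bound $(1-p)^{\binom{n/2}{k-1}}$, and carry out the same asymptotic collapse to $p\binom{n/2}{k-1}=2\alpha\ln(k)(1+\oh{1})$. You spend more words justifying the conditional independence (which the paper largely leaves implicit), but the substance and calculations are identical.
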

\begin{proof}
We compute the probability of a successful check on an arbitrary vertex $v$. It fails if there exists an almost monochromatic edge for which $v$ is the head.
There are at most $\binom{n/2}{k-1}$ candidates for such an edge, so the probability that none of them shows up is at least $(1-p)^{\binom{n/2}{k-1}}$.
We have
\begin{align*}
	p \binom{n/2}{k-1} = q \varphi^{-1} 2^{k-1} \frac{\binom{n/2}{k-1}}{\binom{n}{k}}
	= q \frac{k}{2 (n/2 - k+1)} \varphi^{-1} 2^{k} \frac{(n/2)^{\underline{k}}}{ n^{\underline{k}}} \\
	= q \frac{k}{(n - 2k+2)} = 2 \alpha \ln(k) \frac{n}{(n - 2k+2)} = 2\alpha \ln(k) (1+\oh{1})).
\end{align*}
Using the fact that $1-p = \expb{-p + \Oh{p^2}} = \expb{-p \cdot (1+\oh{1})}$ we obtain:
\begin{align*}
(1-p)^{\binom{n/2}{k-1}} = \exp\left[-p \cdot \binom{n/2}{k-1} (1+\oh{1}) \right] = k^{-2 \alpha (1+\oh{1})}.
\end{align*}
\end{proof}
We show next that, during the whole algorithm, on the list of checked vertices $C_R$, there is a significant fraction of the ones which have passed the safety check (and thus have been recolored).

\begin{observation}
\label{obs:checked-colored}
	Let $\alpha' = (1/2+\alpha)/2$.
	Asymptotically almost surely, for every $l$ such that $k/2 \leq l \leq n/2$, the prefix of $C_R$ of length $l$ contains at least $l \cdot  k^{-2\alpha'}$ recolored vertices.
\end{observation}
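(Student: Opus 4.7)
The plan is to reduce the statement to a Chernoff-type lower-tail bound on a binomial random variable, followed by a union bound over the $\Oh{n}$ possible values of $l$. Let $Y_i$ denote the indicator that the $i$-th vertex appended to $C_R$ passes the modified safety check (equivalently, gets recolored, since the analysis operates under the modified-check framework). By Observation~\ref{obs:safety-check-prob}, for every $i$ and every history $\mathcal{F}_{i-1}$ of previously revealed indicators and random edges,
\[
  \prob[Y_i = 1 \mid \mathcal{F}_{i-1}] \geq \delta,
\]
where $\delta = k^{-2\alpha(1+\oh{1})}$.

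First I would invoke the standard coupling turning such a conditional lower bound into stochastic domination: together with the $Y_i$'s one constructs i.i.d.\ Bernoulli$(\delta)$ variables $Z_1, Z_2, \ldots$ with $Y_i \geq Z_i$ almost surely for every $i$. The number of recolored vertices among the first $l$ entries of $C_R$ is then at least $S_l := Z_1 + \cdots + Z_l$, which is distributed as $\mathrm{Bin}(l,\delta)$ with mean $l\delta$.

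Next, because $\alpha' - \alpha = (1/2-\alpha)/2 > 0$, the ratio $k^{-2\alpha'}/\delta = k^{-2(\alpha'-\alpha) + \oh{1}}$ is $\oh{1}$, so the target $l \cdot k^{-2\alpha'}$ is eventually at most half of the mean $l\delta$, uniformly in $l$. A standard Chernoff lower-tail inequality then yields
\[
  \prob\bigl[S_l < l \cdot k^{-2\alpha'}\bigr] \leq \expb{-l\delta/8}.
\]
For $l \geq k/2$ this exponent is at least $(1/16) \cdot k^{1 - 2\alpha + \oh{1}}$, which grows like $k^{\Omega(1)}$ because $\alpha < 1/2$. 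Since $n$ is polynomially bounded in $k$, a union bound over $l \in \{k/2, \ldots, n/2\}$ costs only a $\mathrm{poly}(k)$ factor, and the total failure probability $\mathrm{poly}(k) \cdot \expb{-k^{\Omega(1)}}$ is still $\oh{1}$.

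The main obstacle, such as it is, lies in extracting an honest i.i.d.\ lower bound from Observation~\ref{obs:safety-check-prob}: the quoted probability is conditional on an arbitrary history generated by previous safety checks and by the random edges already revealed, so some care is needed to verify that the sequence of modified safety checks is adapted to a filtration to which the standard coupling applies. Once the coupling is in place, the remaining steps are routine Chernoff and union-bound calculations.
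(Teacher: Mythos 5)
Your proof is correct, but it takes a genuinely different route from the paper. The paper avoids Chernoff entirely: it bounds the probability of a \emph{run} of $\tfrac{1}{2}k^{2\alpha'}$ consecutive failed checks by $(1-\delta)^{\frac12 k^{2\alpha'}} \leq \expb{-\frac12 k^{2(\alpha'-\alpha)+\oh{1}}}$, union-bounds over the at most $n/2$ possible starting positions of such a run, and then observes that the absence of any long run forces, by pigeonhole, at least $\lfloor 2l k^{-2\alpha'}\rfloor$ successes in every prefix of length $l$; the hypothesis $l \geq k/2$ is then used only to absorb the floor into the stated bound $l k^{-2\alpha'}$. Your approach instead applies a Chernoff lower-tail bound to the partial sums $S_l$ after stochastically dominating the (non-i.i.d.) check indicators from below by i.i.d.\ Bernoulli$(\delta)$ variables, and then union-bounds over $l$. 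The two arguments cost the same in the union bound and exploit the same two ingredients (Observations \ref{obs:safety-chance} and \ref{obs:safety-check-prob}); both also rely implicitly on the fact that the conditional success probability is uniformly at least $\delta$ regardless of history. The paper's run-length argument is a bit more elementary (only the inequality $1-x\leq e^{-x}$ is needed, no concentration inequality or explicit coupling), and it automatically handles all prefixes at once without a separate union bound over $l$. Your argument is more standard and mechanical, and the coupling-to-i.i.d. step you flag as needing care is indeed the only non-routine part; once it is justified (which Observation \ref{obs:safety-check-prob} as stated supports), the rest is a textbook computation.
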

\begin{proof}
Let us first consider a chance for a long series on consecutive failed safety checks -- namely, $\frac{1}{2} k^{2 \alpha'}$ failures in a row.
Observe that such a sequence can only happen on indicators that were not previously checked (otherwise they would automatically suceed). By Observation \ref{obs:safety-chance} they are conducted on disjoint sets of vertices and thus are independent. The chance that we observe such a series of failures is, consequently, $(1-\delta)^{\frac{1}{2} k^{2 \alpha'}} < \expb{-\delta \cdot \frac{1}{2} \cdot k^{2\alpha'}} = \expb{-\frac{1}{2} k^{2(\alpha' - \alpha) +\oh{1}}}$.
The list $C_R$ reaches the length at most $n/2$, hence straightforward union bound shows that the probability of observing a long series of failures is smaller than $(n/2) \expb{-\frac{1}{2}k^{2(\alpha' - \alpha) +\oh{1}) }}= \oh{1}$.

This implies that every prefix of $C_R$ of length $l$ contains at least $\lfloor l \cdot 2 \cdot k^{-2 \alpha'} \rfloor$ recolored vertices.
However, since $l \geq k/2$ we get $l = \omega( k^{2 \alpha'} )$, which is sufficient to deduce that, for large enough $k$, we have 
$l \cdot k^{-2 \alpha'} \leq \lfloor l \cdot 2 \cdot k^{-2 \alpha'} \rfloor$.
\end{proof}

\begin{observation}
The modified procedure asymptotically almost never declares failure.
\end{observation}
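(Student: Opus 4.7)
I would bound, for each $r \leq N_R$, the probability that the algorithm declares failure while processing the $r$-th red edge, and close by a union bound. Observation~\ref{obs:recol-steps} gives $N_R \leq q = \Oh{n\ln(k)/k}$ a.a.s., which is polynomial in $k$ since $n$ is polynomially bounded. Throughout I condition on the a.a.s.\ event from Observation~\ref{obs:checked-colored} that every prefix of $C_R$ of length at least $k/2$ contains at least a $k^{-2\alpha'}$ fraction of recolored vertices; blue edges are treated symmetrically.

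Fix $r \leq N_R$ and let $e$ be the $r$-th red edge (a uniformly random $k$-subset of $\redset_0$), $C = C_R^{(r)}$ its state at the start of processing, $R \subseteq C$ the recolored subset, $U = C \setminus R$, and $c = |C|/(n/2)$. A failure at $e$ forces (i) $e \cap R = \emptyset$ (else the procedure restarts on encountering a recolored vertex) and (ii) every vertex of $e \setminus C$ fails its safety check. Observation~\ref{obs:safety-chance} makes these safety checks on disjoint indicator sets, hence independent, and Observation~\ref{obs:safety-check-prob} bounds each failure probability by $1-\delta$. Setting $f(v) = 1$ for $v \in U$, $f(v) = 0$ for $v \in R$, and $f(v) = 1-\delta$ for $v \notin C$, we have
\[
\prob[\text{failure at } e \mid e] \leq \prod_{v \in e} f(v),
\]
so averaging over $e$ yields $\prob[\text{failure at } e] \leq e_k(f)/\binom{n/2}{k}$, where $e_k$ is the $k$-th elementary symmetric polynomial. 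Maclaurin's inequality then gives
\[
\prob[\text{failure at } e] \leq \left(\frac{1}{n/2}\sum_{v \in \redset_0} f(v)\right)^k \leq \expb{-c k^{1-2\alpha'} - (1-c) k\delta},
\]
after plugging in $|U|/(n/2) \leq c(1-k^{-2\alpha'})$ from Observation~\ref{obs:checked-colored} (in the marginal regime $|C| < k/2$ one uses the trivial $|U|\leq|C|$, in which case the safety-check term $(1-c)k\delta \sim k\delta$ already dominates). Because $\alpha' > \alpha$ and both lie strictly below $1/2$, the exponent is a convex combination of $k^{1-2\alpha'} = k^{1/2-\alpha}$ and $k\delta = k^{1-2\alpha(1+\oh{1})}$, hence always at least $k^{1/2-\alpha}$. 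Each per-edge failure probability is therefore super-polynomially small in $k$, and the union bound over $r \leq N_R$ (and the symmetric argument for blue edges) yields $\oh{1}$.

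\textbf{Expected main obstacle.} The delicate point is fusing the two failure routes — the random edge avoiding $R$, and all of its new vertices failing the safety check — into a single sharp bound. A naive hypergeometric-to-binomial reduction treating each vertex of $e$ independently would introduce an $\expb{\Oh{k^2/n}}$ overhead in replacing falling factorials by powers, which can swamp the main exponent when $n$ is merely superlinear (e.g.\ $n = k^{1+\varepsilon}$). Applying Maclaurin's inequality directly to $e_k(f)/\binom{n/2}{k}$ handles both failure mechanisms simultaneously and avoids this loss; this is essential because for $\alpha$ close to $1/2$ neither the ``avoid $R$'' bound nor the ``all safety checks fail'' bound is separately strong enough.
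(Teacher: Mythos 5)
Your bound for the probability of failure while processing an actual red edge ($r \leq N_R$) is correct and in fact takes a cleaner route than the paper. The paper's proof splits into two cases (``at least $k/2$ vertices of $e$ are already in $C_R$'' vs.\ ``at least $k/2$ are unchecked'') and bounds each by $(1 - k^{-2\alpha'})^{k/2}$; your Maclaurin/elementary-symmetric-function argument fuses both mechanisms into a single quantity $e_k(f)/\binom{n/2}{k} \leq \big(\tfrac{1}{n/2}\sum_v f(v)\big)^k$, avoiding the case split and, as you note, also sidestepping the hypergeometric-to-binomial slack which would matter for $n$ barely superlinear. Both methods yield a per-edge failure probability of roughly $\exp(-\Omega(k^{1/2-\alpha}))$, which is super-polynomially small, so the union bound over $\Oh{n\ln k/k}$ edges goes through.

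However, you have a genuine gap: you never address the second way the procedure can declare failure, namely when $r$ has already been incremented past $N_R$ and \FuncSty{PickRedVertex} finds that $C_R$ contains all of $\redset_0$. This branch is reachable because the main loop keeps calling \FuncSty{PickRedVertex} as long as $b \leq N_B$, so the procedure can exhaust the actual red edges and continue on the artificial ones, each time potentially adding more failed vertices to $C_R$. If no bound on $|C_R|$ is established, nothing prevents $C_R$ from eventually filling $\redset_0$, triggering FAIL. The paper closes this by observing that, conditioned on Observation~\ref{obs:recol-steps} and \ref{obs:checked-colored}, the number of recolored vertices is at most $q$ at any call, hence $|C_R| \leq (q+1)k^{1-2\alpha'} = \oh{n}$; therefore $(n/2)(1-\oh{1})$ red vertices remain unchecked and the probability they are all unsafe is $\oh{\exp(-q)}$, which beats the union bound over at most $q$ calls. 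You need an argument of this kind in addition to what you wrote; without it the proof is incomplete.
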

\begin{proof}
The whole procedure fails if one of the choice procedures is unable to find a vertex to be recolored.
As usual we focus on \FuncSty{PickRedVertex} and show that such situation a.a.s. does not happen.

The first possibility for the failure is that the procedure does not find a safe vertex or recolored vertex in an edge that was initially red.
When we reveal the vertices of such an edge $e$, some of them are already checked. 
The procedure fails if all unchecked vertices fail the safety check, and no vertex of $e$ from $C_R$ was recolored (i.e. safe) when met for the first time. 
At the time when we start to analyze edge $e$, at least one of the following two conditions must hold: either at least $k/2$ vertices of $e$ are in $C_R$, or at least $k/2$ are yet unchecked. 

Suppose first that at least $k/2$ of the vertices of $e$ have been checked and added to $C_R$ (recall that each vertex is checked for safety only once, and if it fails it is treated as permanently unsafe.)
In the altered probabilistic space, the choices of the vertices of $e$ are not influenced by other monochromatic edges, so the set of $k/2$ vertices that belong to $C_R$ is chosen uniformly from $k/2$-subsets of $C_R$.
Due to Observation \ref{obs:checked-colored} the chance that they are all unsafe is smaller than $(1-k^{-2\alpha'})^{k/2}$.

The other case is that there are $k/2$ unchecked vertices. For the algorithm to fail on $e$, all of them must not pass the safety check.
Observation \ref{obs:safety-chance} implies that the chance of this event is at most $(1-\delta)^{k/2} <  (1-k^{-2 \alpha'})^{k/2}$. 
Therefore in both cases the chance that the procedure fails while considering edge $e$ is never bigger than $(1-k^{-2 \alpha'})^{k/2} < \expb{-\frac{1}{2}k^{1-2 \alpha'}}$. 

By union bound and Observation \ref{obs:recol-steps}, the procedure fails explicitly with probability at most $2 n/k \cdot \ln k \cdot \expb{-\frac{1}{2}k^{1-2\alpha'}} = \oh{1}$.

Note that once $r$ exceeds $N_R$ a failure can only occur when all the vertices of $\redset_0$ are already checked.
Hence the necessary condition for the procedure to fail at that point is that there are no safe unchecked vertices at the time when the procedure is called. 
We show that the probability of that event is very small.
By the construction of the main procedure, at each time when \FuncSty{PickRedVertex} is called, the number of recolored red vertices does not exceed $\max(N_R, N_B)$.
Both these values are smaller than $q$ a.a.s.
It means that, at each time when the procedure is called, list $C_R$ contains at most $q$ recolored vertices.
Observation \ref{obs:checked-colored} implies that it contains at most $(q+1)k^{1-2 \alpha'}= \oh{n}$ vertices. 
Therefore $(n/2)(1-\oh{1})$ red vertices are still not checked.
The probability that they all turn out to be unsafe is 
$
	(1-\delta)^{(n/2)(1+\oh{1})} < \exp (- \delta (n/2) (1+\oh{1})  )
$
which is $\oh{\exp(-q)}$.
Since the number of calls to the procedure is a.a.s. smaller than $q$, the situation which allows the procedure to fail because all the vertices of $\redset_0$ are already checked a.a.s. does not happen.
\end{proof}

\begin{observation}
	Asymptotically almost surely, no corrupted safety checks will happen.
	\label{obs:no-corrupted}
\end{observation}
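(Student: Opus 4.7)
The plan is a first-moment calculation. Call an edge $f\in H$ \emph{bad} if it has exactly one vertex $w$ in one of the initial colour classes (without loss of generality $w\in\redset_0$, so $B_f:=f\setminus\{w\}\subseteq\blueset_0$) and at least $k-2$ vertices of $B_f$ lie in the set $R\subseteq\blueset_0$ of vertices that are recoloured from blue to red during the algorithm. The structural discussion preceding the observation shows that any corrupted safety check forces the existence of such a bad edge (with the role of $u$ played by any vertex of $B_f\setminus R$), so it will be enough to show $\expt[\#\text{bad edges}]=\oh{1}$; the case with red and blue swapped contributes identically.

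First I would control the distribution of $R$. By Observation \ref{obs:recol-steps}, $|R|\leq q$ a.a.s. The sampling of initially monochromatic edges and the procedures \FuncSty{PickRedVertex}, \FuncSty{PickBlueVertex} treat the vertices of $\blueset_0$ entirely symmetrically, so conditional on its cardinality $R$ is uniformly distributed over subsets of $\blueset_0$ of that size. Consequently, for any fixed $(k-2)$-subset $S\subseteq\blueset_0$,
\[
  \prob[S\subseteq R] \leq (1+\oh{1})\binom{q}{k-2}\bigg/\binom{n/2}{k-2} \leq (1+\oh{1})\left(\frac{4\alpha\ln k}{k}\right)^{k-2}.
\]

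Next I would bound $\prob[I_f=1,\ |B_f\cap R|\geq k-2]$ for a single candidate pair $(w,B_f)$. Because $f$ has both colours, $I_f$ is independent of the initially monochromatic edges, and the only way $R$ can depend on $I_f$ is through safety checks that actually query $I_f$. Under the modified-safety-check convention, $I_f$ is queried at most once, so at most one vertex's safety verdict is affected by its value. A short coupling with the auxiliary ``ignore-$I_f$'' algorithm (which always pretends $I_f=0$ when asked and is therefore genuinely independent of $I_f$) would show that the distribution of $R$ changes by a factor $1+\oh{1}$, yielding
\[
  \prob[I_f=1,\ |B_f\cap R|\geq k-2] \leq (1+\oh{1})(k-1)\,p\left(\frac{4\alpha\ln k}{k}\right)^{k-2}.
\]
Summing over the $(n/2)\binom{n/2}{k-1}$ candidates $(w,B_f)$ and using the identity $p\binom{n/2}{k-1}=2\alpha\ln(k)(1+\oh{1})$ established in the proof of Observation \ref{obs:safety-check-prob}, the expected number of bad edges is at most $(1+\oh{1})\,n\alpha\ln(k)\cdot k\cdot(4\alpha\ln k/k)^{k-2}$. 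For polynomially bounded $n=n(k)$, this is super-polynomially small in $k$, in particular $\oh{1}$, which finishes the argument by Markov's inequality.

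The main obstacle will be the decoupling step. The probability that $I_f$ is queried \emph{at all} during the run is \emph{not} small: initially $f$ is already almost-monochromatic with $w$ as its head, so $w$'s safety check (whenever it happens) tests $I_f$. Therefore one cannot simply assert independence and factor the joint probability as $p\cdot\prob[S\subseteq R]$. The coupling argument needs to show that flipping the outcome of that single safety check produces only a bounded cascade of changes among the subsequent recolourings of $\blueset_0$-vertices, so that the resulting perturbation of $\prob[S\subseteq R]$ is absorbed into the $(1+\oh{1})$ factor above, rather than spoiling the crucial $(4\alpha\ln k/k)^{k-2}$ bound.
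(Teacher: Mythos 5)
Your proposal takes a genuinely different route from the paper, and it has a real gap at the step you yourself flag. You compute a first moment over \emph{candidate edges} $f$, which forces you to control the joint law of $(I_f, R)$; the problematic conditioning on $I_f=1$ is exactly what the paper avoids. The paper's trick is to run the first moment over \emph{witnessing tails} instead: when a safety check on a vertex $v$ fails, we need only reveal to the algorithm the binary outcome ``some almost-monochromatic edge has $v$ as head,'' not which one. Hence, conditioned on the full sequence of safety-check verdicts (and therefore on $R$), the witnessing tails remain uniformly distributed $(k-1)$-subsets of the relevant colour class, and their revelation can be deferred to the end. The bound on ``$k-2$ of its vertices get recoloured'' is then an honest post-hoc counting statement against a fixed $R$, with no decoupling needed. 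Your approach inverts the conditioning — you need $R$'s law conditioned on $f$ being an edge — and this is what creates the cascade you describe.

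Concretely, the gap is this: flipping $I_f$ changes whether $w$ passes its safety check, which changes whether $w$ is recoloured, which changes the current red set seen by every subsequent blue safety check, which changes which indicators those checks query, and so on. There is no obvious local bound on this cascade. Your symmetry claim that $R$ is exchangeable over $\blueset_0$ is fine \emph{unconditionally}, but once you condition on $I_f=1$ (or even on ``$f$ has $w\in\redset_0$ and $B_f\subseteq\blueset_0$ as fixed named vertices'') the permutation symmetry of $\blueset_0$ is broken precisely on the $k-1$ vertices $B_f$ you care about, so you cannot reuse the uniform-$R$ bound for the event $\{B_f\setminus\{u\}\subseteq R\}$ without the coupling argument. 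You acknowledge this, but the coupling is not supplied, and to my eye it is not a routine fix: the divergence between the ``$I_f=0$'' and ``$I_f=1$'' runs is not confined to $w$, because the two runs subsequently expose different indicators. In contrast, the paper's deferred-revelation argument makes the entire issue disappear, and also yields cruder but sufficient bounds ($n^2$ witnessing tails a.a.s., probability $n(2q/n)^{k-1}$ per tail, union bound) without ever touching the conditional law of $R$. If you want to salvage the edge-centric first moment, the cleanest repair is essentially to rediscover the paper's idea: replace ``condition on $I_f=1$ and analyse $R$'' with ``condition on $R$ and the safety-check outcomes, and observe that the unrevealed tail of $f$ is uniform.''
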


\begin{proof}
	Every time, when a blue vertex fails the safety check, it is the head of at least one edge that is currently almost red.
	The tail of every such edge is called a \emph{witnessing tail}.
	It is crucial to observe that it is completely irrelevant for the further evaluation of the modified procedure what is the specific content of the witnessing tails.
	Moreover, it is not even necessary to reveal that content to the coloring procedure.
	It means that, conditioned on the specific outcomes of the safety checks, the witnessing tails are distributed uniformly among the $(k-1)$-subsets of the sets of vertices that were of the appropriate color at the time of witnessing.
	
	A corrupted safety check happens when some witnessing tail $f$ subsequently gets $k-2$ of its vertices recolored. For the sake of analysis we postpone the revealing of the content of the witnessing tails till the end of the procedure, and then see if some of them caused a corrupted check.
	
	In the final coloring we have at most $q$ vertices recolored to red.
	For a given witnessing tail, the probability that $k-2$ of its vertices are contained in the set of the recolored vertices is at most
	\[
		\frac{\binom{q}{k-2} \cdot(n/2)}{ \binom{n/2}{k-1}} 
		\leq 
		n \cdot \frac{q^{\underline{k-1}}}{(n/2)^{\underline{k-1}}}
		\leq
		n \cdot \frac{q^{k-1}}{(n/2)^{k-1}}
		= n \cdot (2q/n)^{k-1}.
	\]
	 
	The expected number of witnessing tails for every vertex is 
	$p \cdot \binom{n/2}{k-1} = 2\alpha \ln(k) (1+\oh{1}))$ 
	(derived already in Observation \ref{obs:safety-check-prob}). 
	Therefore the expected final number of witnessing tails is $\Oh{n \ln(k)}$, and by Markov equality, asymptotically almost surely there are no more than $n^2$ witnessing tails. 
	But then, by simple union bound, the expected number of corrupted safety checks does not exceed $n^2 \cdot n \cdot (2q/n)^{k-1} = \oh{1}$. 
	Hence, a.a.s. not even one of them happens.
\end{proof}

In many places in the presented proofs we used quite rough estimations.
It is interesting to observe that we did not lose much.
Observation \ref{obs:safety-chance} shows that for $\alpha>1/2$, the fraction of safe vertices among $\redset_0$ is $\oh{k^{-1}}$. 
Therefore a random red edge with high probability avoids them all.
In consequence, in these cases, our procedure is very likely to fail.

\bibliographystyle{siam}
\bibliography{pb}

\end{document}